\documentclass[reqno,12pt]{amsart}

\usepackage{amsmath,amsthm,amsfonts,amssymb}
\usepackage{enumerate}

\newtheorem{theorem}{Theorem}[section]

\newtheorem{lemma}[theorem]{Lemma}
\newtheorem{corollary}[theorem]{Corollary}

\newtheorem{remark}[theorem]{Remark}

\begin{document}

\centerline{\bfseries Some remarks on Talagrand's convolution conjecture}
\vskip .2in
\centerline{\bfseries Alexander Shaposhnikov\footnotemark}
\footnotetext{e-mail: shal1t7@mail.ru}
\vskip .2in

\centerline{{\bf Abstract}}

\vskip .1in

We present a new martingale--based approach to Talagrand's convolution conjecture in the Gaussian and Boolean cube settings.  The argument yields explicit constants in the weak--type estimates. 

\section{Introduction}

Let $\mu$ be the uniform probability measure on $\{-1,1\}^n$, and let
$T_\rho$, $0<\rho<1$, be the noise operator. Talagrand's convolution
conjecture \cite{Talagrand} asserts that
\[
\mu\left(T_\rho f>\eta\int f\,d\mu\right)
\leq\frac{C_\rho}{\eta\sqrt{\log\eta}},\qquad \eta>1,
\]
for every $f\geq0$, where $C_\rho$ depends only on $\rho$.

The Gaussian counterpart, studied in \cite{BallEtAl}, replaces $\mu$
by the standard Gaussian measure $\gamma_n$ on $\mathbb{R}^n$ and
$T_\rho$ by the Ornstein--Uhlenbeck operator
\[
Q_\rho f(x):=\int_{\mathbb{R}^n}
f(\rho x+\sqrt{1-\rho^2}y)\,d\gamma_n(y).
\]
Eldan and Lee \cite{EldanLee} proved the Gaussian bound with an additional
iterated-logarithmic factor, which was removed by Lehec \cite{Lehec}.

The Boolean conjecture was recently settled by Lu, Guo and Fang
\cite{LuGuoFang} using an AI-generated proof based on Chen's perturbed
reverse-heat approach \cite{Chen},  see also the refinement in \cite{XiangZhang}.
In this note we give martingale proofs in both settings. Our arguments do
not use the coupling constructions of the above works or semilog-convexity
estimates. We obtain the explicit constants $[2(1-\rho^2)]^{-1/2}$ and
$\sqrt{(1+\rho)/(1-\rho)}$ in the Gaussian and Boolean cases, respectively.
Both are of order $(1-\rho)^{-1/2}$ as $\rho\uparrow1$, or equivalently
$t^{-1/2}$ as $t\downarrow0$ when $\rho=e^{-t}$. In the Boolean case this
improves the order $(1-\rho)^{-3/2}\sqrt{\log(1/(1-\rho))}$ in
\cite[Theorem~1.1]{LuGuoFang}. We refer the reader to the above papers for
a detailed exposition of the history of the problem.

\section{Gaussian case}

Let $B$ be the standard Brownian motion in $\mathbb{R}^n$ with the standard
filtration $(\mathcal{F}_t)_{t\geq0}$ and law $\mathbb{P}$. Let
$(P_t)_{t\geq0}$ be the standard heat semigroup on $\mathbb{R}^n$.
For a Borel-measurable function $f\geq0$, put $F_1:=f(B_1)$ and assume
$\mathbb{E}F_1=1$. Set
\[
F_t:=P_{1-t}f(B_t)=\mathbb{E}[F_1\mid\mathcal{F}_t],
\qquad d\mathbb{Q}:=F_1\,d\mathbb{P}.
\]

Fix $0<s<1$ and $\eta>1$. For now, suppose that $f$ is continuously
differentiable with bounded gradient, bounded above and bounded away
from zero. It\^o's formula gives
\[
d\log F_t
=\frac{\langle\nabla P_{1-t}f(B_t),dB_t\rangle}{F_t}
-\frac{|\nabla P_{1-t}f(B_t)|^2}{2F_t^2}\,dt.
\]
Set
\[
A:=\{F_s>\eta\},
\qquad
\sigma:=\inf\{t\in[0,s]:F_t\geq\eta\}\wedge s,
\]
and define, for $0\leq t\leq s$,
\[
\begin{gathered}
u_t:=1_{\{t\leq\sigma\}}
\frac{\nabla P_{1-t}f(B_t)}{F_t},\\[1mm]
M_t:=\int_0^t\langle u_r,dB_r\rangle,
\qquad [M]_t:=\int_0^t|u_r|^2\,dr,
\qquad U_t:=\int_0^t u_r\,dr.
\end{gathered}
\]
Then
\[
\begin{gathered}
\log F_{t\wedge\sigma}=M_t-\tfrac12[M]_t,\\[1mm]
M_s=\log F_\sigma+\tfrac12[M]_s\geq\log\eta\quad\text{on }A.
\end{gathered}
\]

\emph{Plan of the proof.} The main observation, proved in
Lemma~\ref{lem:representation}, is that
\[
(F_1-\eta)1_A-\mathbb{E}(F_s-\eta)_+
=\int_0^1\langle h_t,dB_t\rangle,
\qquad h\text{ is a martingale}.
\]
We extend $u$ to $(s,1]$ by $-U_s/(1-s)$, so that $U_1=0$.
Since $\mathbb{E}M_1=0$, It\^o's isometry and the martingale property of
$h$ then give
\[
\mathbb{E}[(F_1-\eta)1_AM_1]
=\mathbb{E}\langle U_1,h_1\rangle=0.
\]
Thus, using $\mathbb{E}[M_1\mid\mathcal{F}_s]=M_s$,
\[
\mathbb{E}[F_1 1_AM_1]
=\eta\,\mathbb{E}[1_AM_s]
\geq\eta\log\eta\,\mathbb{P}(A).
\]
The same condition $U_1=0$ gives the weighted It\^o isometry:
\[
\mathbb{E}[F_1M_1]=0,
\qquad
\mathbb{E}[F_1M_1^2]=\mathbb{E}[F_1[M]_1]
\leq\frac{2\log\eta}{1-s}.
\]
Here stopping gives $\mathbb{E}[F_1[M]_s]\leq2\log\eta$, and the return
costs at most a factor $(1-s)^{-1}$ in energy. Cauchy--Schwarz therefore
bounds the same expectation from above:
\[
\begin{aligned}
\mathbb{E}[F_1 1_AM_1]
&=\mathbb{E}[F_1(1_A-\tfrac12)M_1]\\
&\leq\tfrac12\sqrt{\mathbb{E}[F_1M_1^2]}
\leq\sqrt{\frac{\log\eta}{2(1-s)}}.
\end{aligned}
\]
\pagebreak[3]

\begin{lemma}\label{lem:representation}
Suppose that $f$ is bounded and continuously differentiable, with bounded
gradient. For any $\eta>0$ and $0<s<1$, there exists a martingale
$(h_t)_{0\leq t\leq1}$ such that
\[
1_A(F_t-\eta)-\mathbb{E}(F_s-\eta)_+
=\int_0^t\langle h_r,dB_r\rangle,
\qquad s\leq t\leq1,
\]
where $A=\{F_s>\eta\}$.
\end{lemma}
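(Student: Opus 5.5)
We construct $h$ explicitly from the Clark--Ocone representations of $F_s$ and of $F_t$. The guiding observation is that on $[s,1]$ the indicator $1_A$ is $\mathcal{F}_s$-measurable and $F_t$ is a martingale. The representation therefore splits into two pieces: the increment of $F$ after time $s$, multiplied by $1_A$, and the martingale representation of the $\mathcal{F}_s$-measurable variable $(F_s-\eta)_+$.

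\emph{Step 1 (after time $s$).} Itô's formula for $F_t=P_{1-t}f(B_t)$ gives $dF_t=\langle\nabla P_{1-t}f(B_t),dB_t\rangle$, because $P_{1-t}f$ solves the backward heat equation. For $s\le t\le 1$,
\[
1_A(F_t-\eta)=1_A(F_s-\eta)+\int_s^t\langle 1_A\nabla P_{1-r}f(B_r),dB_r\rangle .
\]
Here $1_A(F_s-\eta)=(F_s-\eta)_+$. On $(s,1]$ we set $h_r:=1_A\nabla P_{1-r}f(B_r)$. This process is a martingale on $[s,1]$: $1_A$ is $\mathcal{F}_s$-measurable, and $\nabla P_{1-r}f(B_r)=P_{1-r}\nabla f(B_r)=\mathbb{E}[\nabla f(B_1)\mid\mathcal{F}_r]$ by commuting the gradient with the heat semigroup, which is legitimate since $\nabla f$ is bounded and continuous.

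\emph{Step 2 (before time $s$).} We represent $G:=(F_s-\eta)_+=g(B_s)$ with $g:=(P_{1-s}f-\eta)_+$. The function $g$ is bounded and Lipschitz, since $P_{1-s}f$ is $C^1$ with bounded gradient, and $\nabla g=1_{\{P_{1-s}f>\eta\}}\nabla P_{1-s}f$ almost everywhere. Applying the same reasoning to the martingale $P_{s-r}g(B_r)$, $r\le s$, gives
\[
G=\mathbb{E}G+\int_0^s\langle \nabla P_{s-r}g(B_r),dB_r\rangle .
\]
For $r\le s$ we set $h_r:=\nabla P_{s-r}g(B_r)=P_{s-r}\nabla g(B_r)=\mathbb{E}[\nabla g(B_s)\mid\mathcal{F}_r]$, which is a martingale on $[0,s]$ with terminal value $h_s=\nabla g(B_s)=1_A\nabla P_{1-s}f(B_s)$.

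\emph{Step 3 (gluing).} Combining Steps 1 and 2 yields the identity for all $t\in[s,1]$. It remains to check that $h$ is a martingale on the whole interval $[0,1]$, and the key point is the consistency at $t=s$. For $r\le s\le t$ we have
\[
\mathbb{E}[h_t\mid\mathcal{F}_r]=\mathbb{E}\bigl[1_A\,\mathbb{E}[\nabla f(B_1)\mid\mathcal{F}_s]\mid\mathcal{F}_r\bigr]=\mathbb{E}[1_A\nabla P_{1-s}f(B_s)\mid\mathcal{F}_r]=\mathbb{E}[\nabla g(B_s)\mid\mathcal{F}_r]=h_r .
\]
So the two martingales match at time $s$, because $h_s=\nabla g(B_s)$ is exactly the gradient of the positive part, namely the indicator of $A$ times $\nabla P_{1-s}f$.

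The main obstacle is justifying $\nabla P_{s-r}g=P_{s-r}\nabla g$ and Itô's formula for the merely Lipschitz function $g$. We handle this by working with $r<s$, where $P_{s-r}g$ is smooth and the formula is the standard heat-semigroup commutation via integration by parts for Lipschitz $g$. We then let $r\uparrow s$ using boundedness of $\nabla g$ together with $L^2$ convergence of the stochastic integrals.

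On the level set $\{P_{1-s}f=\eta\}$, the a.e. gradient may be ambiguous. This does not matter: the set $\{F_s=\eta\}\cap\{\nabla P_{1-s}f(B_s)\ne0\}$ is null, and where the gradient vanishes both choices of $1_A$ give $h_s=0$.
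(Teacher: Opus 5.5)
Your proposal is correct and follows essentially the same route as the paper. It uses It\^o's formula for $F$ on $[s,1]$ multiplied by the $\mathcal{F}_s$-measurable indicator $1_A$, and the backward-heat representation of $(P_{1-s}f-\eta)_+(B_s)$ on $[0,s]$ (It\^o before $s$, then an $L^2$ limit), together with the a.e.\ chain rule and the commutation $\nabla P_{s-r}=P_{s-r}\nabla$ to identify $h_r=\mathbb{E}[1_A\nabla P_{1-s}f(B_s)\mid\mathcal{F}_r]$ and glue the two martingales at time $s$; your explicit consistency check at $t=s$ and the null level-set remark only make precise what the paper leaves implicit.
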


\begin{proof}
Let us recall that
\[
dF_t=\langle\nabla P_{1-t}f(B_t),dB_t\rangle.
\]
Therefore, for $s<t<1$,
\[
1_A(F_t-\eta)-(F_s-\eta)_+
=\int_s^t\langle 1_A\nabla P_{1-r}f(B_r),dB_r\rangle.
\]
At the same time, It\^o's formula applied to the backward heat extension
of $(P_{1-s}f-\eta)_+$, first before time $s$ and then by an $L^2$ limit, gives
\[
(F_s-\eta)_+-\mathbb{E}(F_s-\eta)_+
=\int_0^s\langle h_r,dB_r\rangle,
\]
where
\[
h_r:=\nabla P_{s-r}[(P_{1-s}f-\eta)_+](B_r),\qquad r<s.
\]
The almost-everywhere chain rule for the positive part and commutation
of the gradient with the heat semigroup give
\[
h_r=\mathbb{E}[1_A\nabla P_{1-s}f(B_s)\mid\mathcal{F}_r],
\qquad r<s.
\]
Thus $(h_r)$ is a martingale on $[0,s]$ with
\[
h_s=1_A\nabla P_{1-s}f(B_s).
\]
Since $A\in\mathcal{F}_s$, extending it by
$h_r=1_A\nabla P_{1-r}f(B_r)$ for $s<r<1$, with
$h_1=1_A\nabla f(B_1)$, gives a martingale on $[0,1]$.
Adding the two integral identities completes the proof.
\end{proof}

\begin{lemma}\label{lem:orthogonality}
Let $u$ be a bounded, progressively measurable $\mathbb{R}^n$-valued
process on $[0,1]$ and assume that
\[
M_t=\int_0^t\langle u_r,dB_r\rangle,
\qquad \int_0^1u_r\,dr=0\quad\text{almost surely}.
\]
Then for any square-integrable stochastic integral
\[
N_1=\int_0^1\langle v_r,dB_r\rangle
\]
such that $(v_r)_{0\leq r\leq1}$ is a martingale, the following equality holds:
\[
\mathbb{E}[M_1N_1]=0.
\]
\end{lemma}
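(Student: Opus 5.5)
The plan is to compute $\mathbb{E}[M_1N_1]$ with It\^o's isometry, then use the martingale property of $v$ to move all the randomness to the terminal time. The condition $\int_0^1u_r\,dr=0$ then kills the result.

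\emph{Step 1 (isometry).} Since $u$ is bounded, $M_1$ is square-integrable. $N_1$ is square-integrable by assumption, so $\int_0^1\mathbb{E}|v_r|^2\,dr<\infty$. The polarized It\^o isometry then gives
\[
\mathbb{E}[M_1N_1]=\mathbb{E}\int_0^1\langle u_r,v_r\rangle\,dr=\int_0^1\mathbb{E}\langle u_r,v_r\rangle\,dr .
\]
The exchange of expectation and integral is justified by Cauchy--Schwarz: $\int_0^1\mathbb{E}|u_r||v_r|\,dr\le \|u\|_\infty\bigl(\int_0^1\mathbb{E}|v_r|^2dr\bigr)^{1/2}<\infty$.

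\emph{Step 2 (replace $v_r$ by $v_1$).} Since $(v_r)$ is a martingale, $v_r=\mathbb{E}[v_1\mid\mathcal{F}_r]$. Since $u_r$ is bounded and $\mathcal{F}_r$-measurable, this gives $\mathbb{E}\langle u_r,v_r\rangle=\mathbb{E}\langle u_r,v_1\rangle$ for every $r$. This step needs $v_1\in L^1$. The martingale assumption provides that, and in fact $v_1\in L^2$ whenever $v_r$ is square-integrable for some $r$ near $1$, since $\mathbb{E}|v_r|^2$ is nondecreasing in $r$. At worst, $v_1$ is only integrable, and the identity still holds by the tower property because $u_r$ is bounded.

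\emph{Step 3 (Fubini and the constraint).} The next step is
\[
\int_0^1\mathbb{E}\langle u_r,v_1\rangle\,dr=\mathbb{E}\Bigl\langle\int_0^1u_r\,dr,\;v_1\Bigr\rangle=0,
\]
using $\int_0^1u_r\,dr=0$ almost surely. Fubini is legitimate here: $|\langle u_r,v_1\rangle|\le\|u\|_\infty|v_1|$, and $\mathbb{E}|v_1|<\infty$, so the integrand is jointly integrable on $[0,1]\times\Omega$. Joint measurability of $(r,\omega)\mapsto u_r(\omega)$ holds by progressive measurability.

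The only delicate point is integrability, namely that $v_1$ is integrable (or square-integrable) so that Steps 2 and 3 apply. I expect to handle it by noting that a martingale indexed by the closed interval $[0,1]$ has $v_1$ integrable by definition. The rest is routine.
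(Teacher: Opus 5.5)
Your proof is correct and is the paper's argument: It\^o's isometry, the martingale property to replace $v_r$ by $v_1$, then Fubini together with $\int_0^1u_r\,dr=0$, with the integrability checks (which the paper leaves implicit) spelled out. One slip in an aside: monotonicity of $r\mapsto\mathbb{E}|v_r|^2$ only gives $\mathbb{E}|v_r|^2\le\mathbb{E}|v_1|^2$, so square-integrability of $v_r$ for some $r<1$ does not yield $v_1\in L^2$; this is harmless, since you fall back on $v_1\in L^1$ and the boundedness of $u$, which is all Steps 2 and 3 need.
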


\begin{proof}
Indeed, by It\^o's isometry and the martingale property of $v$,
\[
\begin{aligned}
\mathbb{E}[M_1N_1]
&=\mathbb{E}\int_0^1\langle u_r,v_r\rangle\,dr
=\mathbb{E}\int_0^1\langle u_r,v_1\rangle\,dr\\
&=\mathbb{E}\left\langle\int_0^1u_r\,dr,v_1\right\rangle=0.
\end{aligned}
\]
\end{proof}

Combining Lemmas~\ref{lem:representation} and~\ref{lem:orthogonality},
one readily obtains, for such $f$ and $u$,
\[
\mathbb{E}\left[
M_1\bigl((F_1-\eta)1_A-\mathbb{E}(F_s-\eta)_+\bigr)
\right]=0.
\]
Equivalently, taking into account $\mathbb{E}M_1=0$ and
$A\in\mathcal{F}_s$,
\begin{equation}\label{eq:comparison}
\mathbb{E}_{\mathbb{Q}}[1_AM_1]
=\mathbb{E}[M_1F_1 1_A]
=\eta\mathbb{E}[1_AM_1]
=\eta\mathbb{E}[1_AM_s].
\end{equation}

We next use the following simple observation that we were unable to
find in the literature.

\begin{lemma}\label{lem:girsanov}
Let $u$ be a bounded, progressively measurable $\mathbb{R}^n$-valued
process on $[0,1]$ with respect to $(\mathcal{F}_t)_{t\geq0}$. Let
\[
M_t:=\int_0^t\langle u_r,dB_r\rangle,
\qquad [M]_t:=\int_0^t|u_r|^2\,dr,
\qquad U_t:=\int_0^t u_r\,dr.
\]
If $U_1=0$ almost surely, then for every $\lambda\in\mathbb{R}$,
\[
\mathbb{E}\left[
 e^{\lambda M_1-\lambda^2[M]_1/2}\,\middle|\,B_1
\right]=1.
\]
\end{lemma}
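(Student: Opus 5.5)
Write $Z:=e^{\lambda M_1-\lambda^2[M]_1/2}$. Since $u$ is bounded, Novikov's criterion shows that $Z$ is the terminal value of a true martingale with $\mathbb{E}Z=1$, so $d\widetilde{\mathbb{P}}:=Z\,d\mathbb{P}$ is a probability measure. Proving the identity $\mathbb{E}[Z\mid B_1]=1$ amounts to showing that the law of $B_1$ under $\widetilde{\mathbb{P}}$ equals its law under $\mathbb{P}$, i.e.\ $\mathcal N(0,I_n)$. Indeed, if for every bounded Borel $g$ we have $\mathbb{E}[Z\,g(B_1)]=\mathbb{E}[g(B_1)]$, then $\mathbb{E}[(\mathbb{E}[Z\mid B_1]-1)g(B_1)]=0$ for all such $g$, and taking $g$ to be the sign of $\mathbb{E}[Z\mid B_1]-1$ (a function of $B_1$) gives $\mathbb{E}[Z\mid B_1]=1$ a.s.

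The main step is Girsanov's theorem. Under $\widetilde{\mathbb{P}}$ the process
\[
\widetilde B_t:=B_t-\lambda\int_0^t u_r\,dr=B_t-\lambda U_t,\qquad 0\le t\le1,
\]
is a standard Brownian motion; boundedness of $u$ justifies this on the finite horizon $[0,1]$. The hypothesis $U_1=0$ a.s.\ (equivalently $\widetilde{\mathbb{P}}$-a.s., since $\widetilde{\mathbb{P}}\ll\mathbb{P}$) gives $B_1=\widetilde B_1$. Hence the $\widetilde{\mathbb{P}}$-law of $B_1$ is the $\widetilde{\mathbb{P}}$-law of $\widetilde B_1$, namely $\mathcal N(0,I_n)$. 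This yields $\mathbb{E}[Z g(B_1)]=\widetilde{\mathbb{E}}[g(\widetilde B_1)]=\mathbb{E}[g(B_1)]$, and the first paragraph finishes the proof.

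There is no serious obstacle here. The only points to check are that the density is a true martingale (Novikov, using $\int_0^1|u_r|^2dr\le 1\cdot\|u\|_\infty^2$) and that the almost-sure identity $U_1=0$ transfers to $\widetilde{\mathbb{P}}$ by absolute continuity. As a sanity check one can also argue with characteristic functions. Taking $g(x)=e^{i\langle\xi,x\rangle}$, the product $Z e^{i\langle\xi,B_1\rangle}=Z e^{i\langle\xi,\widetilde B_1\rangle}$ has expectation $e^{-|\xi|^2/2}$. This identifies the conditional expectation through Fourier uniqueness for the finite signed measure $A\mapsto\mathbb{E}[(Z-1)1_{\{B_1\in A\}}]$.
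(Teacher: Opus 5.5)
Your proof is correct and follows the paper's argument essentially verbatim: Girsanov under $Z\,d\mathbb{P}$ together with $U_1=0$ gives $\mathbb{E}[Z g(B_1)]=\mathbb{E}[g(B_1)]$ for every bounded Borel $g$, and this identifies the conditional expectation. Your step with $g=\operatorname{sgn}(\mathbb{E}[Z\mid B_1]-1)$ simply makes explicit the final deduction that the paper leaves implicit.
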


\begin{proof}
Set
\[
Z_t:=\exp\left(\lambda M_t-\frac{\lambda^2}{2}[M]_t\right).
\]
Since $u$ is bounded, $Z$ is a true martingale. By the classical
Girsanov theorem, the process
\[
W_t=B_t-\lambda\int_0^t u_r\,dr
\]
is Brownian motion under the measure $Z_1\,d\mathbb{P}$ with respect to
the filtration $(\mathcal{F}_t)_{0\leq t\leq1}$. For any bounded
measurable $g$, we have
\[
\mathbb{E}[g(B_1)]
=\mathbb{E}[g(W_1)Z_1]
=\mathbb{E}[g(B_1)Z_1],
\]
where the second equality follows from the assumption $U_1=0$.
Since this holds for all bounded measurable $g$, we obtain the desired
conclusion.
\end{proof}

\begin{corollary}\label{cor:moments}
Under the assumptions of Lemma~\ref{lem:girsanov},
\[
\mathbb{E}[M_1\mid B_1]=0,
\qquad
\mathbb{E}[M_1^2\mid B_1]=\mathbb{E}[[M]_1\mid B_1].
\]
\end{corollary}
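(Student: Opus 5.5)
The plan is to differentiate the identity of Lemma~\ref{lem:girsanov} in $\lambda$ at $\lambda=0$. Fix a bounded Borel function $g\ge 0$ on $\mathbb{R}^n$. By Lemma~\ref{lem:girsanov}, for every real $\lambda$,
\[
\Phi(\lambda):=\mathbb{E}\bigl[g(B_1)\,e^{\lambda M_1-\lambda^2[M]_1/2}\bigr]=\mathbb{E}[g(B_1)].
\]
Hence $\Phi$ is constant, and in particular $\Phi'(0)=\Phi''(0)=0$.

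Next I compute these derivatives by differentiating under the expectation. Put
\[
Y_\lambda:=e^{\lambda M_1-\lambda^2[M]_1/2}.
\]
Then
\[
\partial_\lambda Y_\lambda=(M_1-\lambda[M]_1)Y_\lambda,
\qquad
\partial_\lambda^2 Y_\lambda=\bigl((M_1-\lambda[M]_1)^2-[M]_1\bigr)Y_\lambda.
\]
At $\lambda=0$ these give $\mathbb{E}[g(B_1)M_1]=0$ and $\mathbb{E}[g(B_1)(M_1^2-[M]_1)]=0$.

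The only step that needs care is justifying the interchange of derivative and expectation. Let $K:=\sup|u|$. Then $[M]_1\le K^2$ deterministically. Moreover $M_1$ has Gaussian-type tails: by the exponential martingale bound, $\mathbb{E}e^{c|M_1|}\le 2e^{c^2K^2/2}$ for every $c>0$. So for $|\lambda|\le 1$ the derivatives are dominated by
\[
C\,(1+|M_1|+K^2)^2\,e^{|M_1|},
\]
which is integrable, and dominated convergence applies.

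Since $g$ ranges over all bounded Borel functions of $B_1$, the two identities are exactly $\mathbb{E}[M_1\mid B_1]=0$ and $\mathbb{E}[M_1^2\mid B_1]=\mathbb{E}[[M]_1\mid B_1]$. The integrability needed for these conditional expectations to be defined follows from the same moment bounds on $M_1$ and the bound $[M]_1\le K^2$.

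As a cross-check, the first identity also follows from Lemma~\ref{lem:orthogonality}. Take $N_1=\varphi(B_1)-\mathbb{E}\varphi(B_1)$ for smooth bounded $\varphi$. Its Clark--Ocone integrand $\nabla P_{1-r}\varphi(B_r)$ is a martingale, so the lemma gives $\mathbb{E}[M_1\varphi(B_1)]=0$.
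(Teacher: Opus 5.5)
Your proposal is correct and is the paper's argument: differentiate the identity of Lemma~\ref{lem:girsanov} (tested against bounded $g(B_1)$) once and twice at $\lambda=0$, with boundedness of $u$ justifying the interchange. You also supply the domination the paper leaves implicit, namely $[M]_1\le K^2$ together with exponential moments of $M_1$.
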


\begin{proof}
This follows by differentiating the identity in
Lemma~\ref{lem:girsanov} once and twice at $\lambda=0$.
Boundedness of $u$ justifies differentiation.
\end{proof}

This second-moment identity
\[
\mathbb{E}[M_1^2\mid B_1]=\mathbb{E}[[M]_1\mid B_1]
\]
will play a crucial role in the proof of the main result.

\begin{theorem}
For every Borel-measurable $f\geq0$ with $\mathbb{E}F_1=1$,
every $0<s<1$, and every $\eta>1$,
\[
\mathbb{P}\bigl(P_{1-s}f(B_s)>\eta\bigr)
\leq\frac{1}{\eta\sqrt{2(1-s)\log\eta}}.
\]
\end{theorem}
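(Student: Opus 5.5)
We follow the plan sketched before Lemma~\ref{lem:representation}. First treat regular $f$: continuously differentiable with bounded gradient, bounded above, and bounded away from zero. Afterwards we pass to general Borel $f\geq0$ by approximation.

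\textbf{The drift $u$.} With $\sigma$, $u$, $M$, $U$ as defined above on $[0,s]$, extend $u$ to $(s,1]$ by $u_t:=-U_s/(1-s)$, so that $U_1=0$. On $[0,\sigma]$ we have $|u_t|=|\nabla P_{1-t}f(B_t)|/F_t$. This is bounded because $f$ is bounded below by a positive constant and has bounded gradient. Hence $U_s$ is bounded, and $u$ is bounded and progressively measurable on $[0,1]$.

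\textbf{Lower bound.} Lemma~\ref{lem:representation} and Lemma~\ref{lem:orthogonality} give \eqref{eq:comparison}. On $A$ we have $\sigma<s$ or $F_s\geq\eta$, and in either case $F_\sigma\geq\eta$ by continuity. Therefore $M_s=\log F_\sigma+\tfrac12[M]_s\geq\log\eta$ on $A$, and
\[
\mathbb{E}[F_11_AM_1]\geq\eta\log\eta\,\mathbb{P}(A).
\]

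\textbf{Upper bound.} Since $F_1=f(B_1)$ is $\sigma(B_1)$-measurable, Corollary~\ref{cor:moments} gives
\[
\mathbb{E}[F_1M_1]=0,\qquad \mathbb{E}[F_1M_1^2]=\mathbb{E}[F_1[M]_1].
\]
To bound the energy, write
\[
[M]_1=[M]_s+|U_s|^2/(1-s)\leq[M]_s+\frac{s}{1-s}[M]_s=\frac{[M]_s}{1-s},
\]
using Cauchy--Schwarz in the form $|U_s|^2\leq s[M]_s$. Since $[M]_s$ is $\mathcal{F}_s$-measurable,
\[
\mathbb{E}[F_1[M]_s]=\mathbb{E}[F_s[M]_s]=\mathbb{E}[F_\sigma[M]_s],
\]
where the last step uses optional stopping and the fact that $[M]_s=[M]_\sigma$ is $\mathcal{F}_\sigma$-measurable. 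The key step is now the identity $F_\sigma[M]_\sigma=2F_\sigma(M_\sigma-\log F_\sigma)$. Under $d\mathbb{Q}=F_1\,d\mathbb{P}$, the process $M_t-[M]_t$ is a $\mathbb{Q}$-martingale by Girsanov, since $dB=dW+u\,dt$ on $[0,\sigma]$. Hence
\[
\mathbb{E}_\mathbb{Q}[M_\sigma]=\mathbb{E}_\mathbb{Q}[M]_\sigma.
\]
Substituting $M_\sigma=\log F_\sigma+\tfrac12[M]_\sigma$ gives
\[
\tfrac12\mathbb{E}_\mathbb{Q}[M]_\sigma=\mathbb{E}_\mathbb{Q}\log F_\sigma=\mathbb{E}[F_\sigma\log F_\sigma]\leq\log\eta,
\]
because $F_\sigma\leq\eta$. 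So $\mathbb{E}[F_1[M]_1]\leq 2\log\eta/(1-s)$.

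\textbf{Combining.} Using $\mathbb{E}[F_1M_1]=0$ and Cauchy--Schwarz with respect to $F_1\,d\mathbb{P}$, with $|1_A-\tfrac12|=\tfrac12$ and $\mathbb{E}F_1=1$,
\[
\mathbb{E}[F_11_AM_1]=\mathbb{E}[F_1(1_A-\tfrac12)M_1]\leq\tfrac12\sqrt{\mathbb{E}[F_1M_1^2]}\leq\sqrt{\frac{\log\eta}{2(1-s)}}.
\]
Comparing with the lower bound gives
\[
\eta\log\eta\,\mathbb{P}(A)\leq\sqrt{\frac{\log\eta}{2(1-s)}},
\]
which is the claimed inequality.

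\textbf{Approximation.} For general $f\geq0$ with $\mathbb{E}F_1=1$, first reduce by truncation and monotone convergence to bounded $f$. Then replace $f$ by
\[
f_\varepsilon:=\frac{P_\varepsilon f+\varepsilon}{1+\varepsilon}.
\]
This function is smooth, bounded, bounded away from zero, has bounded gradient, and satisfies $\mathbb{E}f_\varepsilon(B_1)=1$. We have $P_{1-s}f_\varepsilon\to P_{1-s}f$ pointwise. By Fatou applied to the strict-inequality events, $\mathbb{P}(P_{1-s}f(B_s)>\eta)\leq\liminf_{\varepsilon\to0}\mathbb{P}(P_{1-s}f_\varepsilon(B_s)>\eta')$ for each $\eta'<\eta$. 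Letting $\eta'\uparrow\eta$ and using continuity of the bound in $\eta$ finishes the proof.

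The main obstacle is the energy estimate $\mathbb{E}_\mathbb{Q}[M]_\sigma\leq2\log\eta$. Here one must handle the change of measure carefully; in particular, $M-[M]$ must be a true $\mathbb{Q}$-martingale, which follows from the boundedness of $u$.
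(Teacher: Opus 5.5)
Your proof is correct and follows the paper's argument step for step. It uses the same stopped drift extended by $-U_s/(1-s)$, the comparison identity \eqref{eq:comparison} for the lower bound, Corollary~\ref{cor:moments} together with $|U_s|^2\leq s[M]_s$ for the second moment, and Cauchy--Schwarz against $1_A-\tfrac12$. The one variation is the energy bound. You get $\mathbb{E}_{\mathbb{Q}}[M]_\sigma=2\mathbb{E}[F_\sigma\log F_\sigma]$ from the fact that $M-[M]=\int\langle u,dW\rangle$ is a $\mathbb{Q}$-martingale, whereas the paper applies It\^o's formula to $F\log F$ under $\mathbb{P}$. These are two readings of the same relative-entropy identity.

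There is one small slip in your approximation step. In general $\mathbb{E}f_\varepsilon(B_1)\neq1$, because $\mathbb{E}P_\varepsilon f(B_1)=\mathbb{E}f(B_{1+\varepsilon})$ and the heat semigroup does not preserve the law of $B_1$. To fix it, normalize by $\mathbb{E}f(B_{1+\varepsilon})+\varepsilon$ instead of $1+\varepsilon$. This constant tends to $1$ for bounded $f$, and your pointwise-convergence and Fatou argument then goes through unchanged.
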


\begin{proof}
First suppose that $f$ is as in the proof outline. Use the stopped drift
$u$, the processes $M,U$, and $A,\sigma$ defined at the start of this
section. By continuity, $F_\sigma\leq\eta$ and $F_\sigma=\eta$ on $A$.
Conditioning on $\mathcal{F}_t$ and applying It\^o's formula
to $F\log F$, as in Shiryaev's proof of the logarithmic Sobolev
inequality \cite[Section~4]{Shiryaev}, yields
\[
\mathbb{E}_{\mathbb{Q}}[M]_s
=\mathbb{E}\int_0^\sigma\frac{d[F]_t}{F_t}
=2\mathbb{E}[F_\sigma\log F_\sigma]
\leq2\log\eta,
\]
where the last inequality uses $F_\sigma\leq\eta$ and
$\mathbb{E}F_\sigma=1$.

For $s<t\leq1$, set $u_t:=-U_s/(1-s)$ and continue $M$ and $U$
by their integral definitions. Then $u$ remains bounded,
$U_1=0$, and
\[
[M]_1=[M]_s+\frac{|U_s|^2}{1-s}
\leq\frac{[M]_s}{1-s},
\]
since $|U_s|^2\leq s[M]_s$. By Corollary~\ref{cor:moments},
\[
\mathbb{E}_{\mathbb{Q}}M_1=0,
\qquad
\mathbb{E}_{\mathbb{Q}}M_1^2
=\mathbb{E}_{\mathbb{Q}}[M]_1
\leq\frac{2\log\eta}{1-s}.
\]
Finally, \eqref{eq:comparison} and Cauchy--Schwarz give
\[
\begin{aligned}
\eta\log\eta\,\mathbb{P}(A)
&\leq\eta\mathbb{E}[1_AM_s]
=\mathbb{E}_{\mathbb{Q}}[1_AM_1]\\
&=\mathbb{E}_{\mathbb{Q}}[(1_A-\tfrac12)M_1]
\leq\tfrac12\sqrt{\mathbb{E}_{\mathbb{Q}}M_1^2}
\leq\sqrt{\frac{\log\eta}{2(1-s)}}.
\end{aligned}
\]
Dividing by $\eta\log\eta$ proves the result for such $f$.

For general $f$, choose smooth $f_k$ bounded above and away from zero,
with bounded gradient, $\mathbb{E}f_k(B_1)=1$, and
$\mathbb{E}|f_k(B_1)-f(B_1)|\to0$. Conditional expectation is an
$L^1$-contraction, so $P_{1-s}f_k(B_s)\to P_{1-s}f(B_s)$ in $L^1$.
Passing to an almost surely convergent subsequence and applying Fatou's
lemma to the indicators of $\{P_{1-s}f_k(B_s)>\eta\}$ proves the result.
\end{proof}

Since $Q_\rho f(x)=P_{1-\rho^2}f(\rho x)$ and
$B_{\rho^2}\stackrel{d}{=}\rho G$ for $G\sim\gamma_n$, taking $s=\rho^2$
gives the Ornstein--Uhlenbeck formulation.

\begin{remark}
Let $f\geq0$, $\int f\,d\gamma_n=1$, and suppose that
$K:=W_1(f\,d\gamma_n,\gamma_n)<\infty$, where $W_1$ is the
$L^1$ Kantorovich distance with respect to the Cameron--Martin norm. By
\cite[Lemma~2.2]{BogachevShaposhnikovShaposhnikov},
\[
\int_{\mathbb{R}^n} Q_{e^{-t}}f\,
\sqrt{\log(1+Q_{e^{-t}}f)}\,d\gamma_n
\leq\sqrt{\log2}+\frac{K}{2\sqrt{t}},\qquad 0<t\leq1.
\]
In particular, by the semigroup property and Jensen's inequality,
$Q_\rho f\in L\log^{1/2}L(\gamma_n)$ for every $0<\rho<1$.
\end{remark}

\section{Boolean case}

Let $\mu$ be the uniform probability measure on $\{-1,1\}^n$ and let
$0<\rho<1$. For $f\geq0$ with $\mathbb{E}_{\mu}f=1$, set
\[
F(x):=T_{\rho}f(x)
=
2^{-n}\sum_{z\in\{-1,1\}^n}
f(z)\prod_{i=1}^n(1+\rho x_i z_i).
\]
Under $\mathbb{Q}$, let $Z$ have distribution $f\,d\mu$ and,
conditionally on $Z$, let $X_1,\ldots,X_n$ be independent with
\[
\mathbb{Q}(X_i=x_i\mid Z)=\frac{1+\rho Z_i x_i}{2},
\qquad x_i\in\{-1,1\}.
\]
Then, for every function $g$ on the cube,
\[
\mathbb{E}_{\mathbb{Q}}g(X)=\mathbb{E}_{\mu}[Fg].
\]

Let $\mathcal{F}_i:=\sigma(X_1,\ldots,X_i)$, and let $x^{(i)}$
denote $x$ with its $i$-th coordinate flipped. Fix $\eta\geq e$ and set
\[
F_k:=\mathbb{E}_{\mu}[F\mid\mathcal{F}_k],
\qquad A:=\{F>\eta\},
\qquad
\sigma:=\min\bigl(\{k\leq n:F_k\geq\eta\}\cup\{n\}\bigr).
\]
Define
\[
u_i:=1_{\{i\leq\sigma\}}
\mathbb{E}_{\mathbb{Q}}[X_i\mid\mathcal{F}_{i-1}],
\qquad M:=\sum_{i=1}^n u_iX_i.
\]
Here $\{i\leq\sigma\}\in\mathcal{F}_{i-1}$, so each $u_i$ is
$\mathcal{F}_{i-1}$-measurable. Since
\[
\mathbb{E}_{\mathbb{Q}}[X_i\mid\mathcal{F}_{i-1}]
=\rho\,\mathbb{E}_{\mathbb{Q}}[Z_i\mid\mathcal{F}_{i-1}],
\]
we have $|u_i|\leq\rho$. The likelihood ratio satisfies
\[
\frac{F_{i\wedge\sigma}}{F_{(i-1)\wedge\sigma}}=1+u_iX_i.
\]
Thus
\[
\log F_\sigma=\sum_{i=1}^n\log(1+u_iX_i)\leq M,
\qquad M\geq\log\eta\quad\text{on }A.
\]
Define also
\[
S_i:=\frac{u_i(X_i-\rho Z_i)}{1-\rho Z_i\operatorname{sgn}(u_i)},
\qquad S:=\sum_{i=1}^n S_i,
\]
where $S_i=0$ when $u_i=0$.

\emph{Plan of the proof.} The main observation, obtained from
Lemma~\ref{lem:boolean-two-point}, is the comparison
\[
\mathbb{E}_{\mathbb{Q}}[1_AS]
\geq\eta\mathbb{E}_{\mu}[1_AM]
\geq\eta\log\eta\,\mu(A).
\]
The first inequality follows because the difference is a sum of
nonnegative contributions from edges crossing the level $\eta$.
Conditionally on $Z$, the $S_i$ are martingale differences. The moment
estimate below and the stopped energy bound give
\[
\mathbb{E}_{\mathbb{Q}}S=0,
\qquad
\mathbb{E}_{\mathbb{Q}}S^2
\leq\frac{1+\rho}{1-\rho}\,
\mathbb{E}_{\mathbb{Q}}\sum_i u_i^2
\leq4\frac{1+\rho}{1-\rho}\log\eta.
\]
Cauchy--Schwarz bounds the same expectation from above:
\[
\begin{aligned}
\mathbb{E}_{\mathbb{Q}}[1_AS]
&=\mathbb{E}_{\mathbb{Q}}[(1_A-\tfrac12)S]\\
&\leq\tfrac12\sqrt{\mathbb{E}_{\mathbb{Q}}S^2}
\leq\sqrt{\frac{1+\rho}{1-\rho}\log\eta}.
\end{aligned}
\]

The following lemmas hold for arbitrary
$\mathcal{F}_{i-1}$-measurable functions $u_i$, with the same notation.

\begin{lemma}\label{lem:boolean-two-point}
For every function $g$ on the cube and every $i$,
\[
\mathbb{E}_{\mathbb{Q}}[S_i g(X)\mid Z]
=
\mathbb{E}_{\mathbb{Q}}\!\left[
(u_iX_i)_+\bigl(g(X)-g(X^{(i)})\bigr)
\,\middle|\,Z\right].
\]
The same identity holds under $\mu$, with $S_i$ replaced by $u_iX_i$.
\end{lemma}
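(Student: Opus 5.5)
The plan is a direct two-point computation in the $i$-th coordinate. First condition on $Z$ and on $X_1,\dots,X_{i-1}$, and then average over $X_i$ and over the later coordinates. Under $\mathbb{Q}$ and conditionally on $Z$, the coordinates are independent, so $X_{i+1},\dots,X_n$ are independent of $X_i$ given $Z$ and $\mathcal{F}_{i-1}$. This lets us replace $g(X)$ by
\[
\bar g(x_i):=\mathbb{E}_{\mathbb{Q}}[g(X)\mid Z,X_1,\dots,X_{i-1},X_i=x_i].
\]
The same conditional law of $(X_{i+1},\dots,X_n)$ is used for both values of $x_i$, so $\bar g(-X_i)$ is exactly the corresponding conditional expectation of $g(X^{(i)})$. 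Both sides of the identity therefore reduce to expressions in $u:=u_i$ (which is $\mathcal{F}_{i-1}$-measurable), $z:=Z_i$, and the two numbers $a:=\bar g(1)$, $b:=\bar g(-1)$. It then suffices to verify a scalar identity for $X_i\in\{-1,1\}$ with $\mathbb{Q}(X_i=x\mid\cdot)=(1+\rho z x)/2$.

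The key step is this scalar identity. By symmetry, take $u>0$; the case $u<0$ follows by swapping the roles of $\pm1$, and $u=0$ is trivial. Then $\operatorname{sgn}(u)=1$, and the left side is
\[
\frac{u}{1-\rho z}\Bigl[\tfrac{1+\rho z}{2}(1-\rho z)a+\tfrac{1-\rho z}{2}(-1-\rho z)b\Bigr]=\frac{u(1+\rho z)}{2}(a-b).
\]
The right side is $\mathbb{E}[(uX_i)_+(\bar g(X_i)-\bar g(-X_i))]$. Only $X_i=1$ contributes, with probability $(1+\rho z)/2$, giving $\tfrac{u(1+\rho z)}{2}(a-b)$. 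The two sides agree. For $u<0$, the factor $1+\rho z$ in the denominator is cancelled by the probability of $X_i=-1$ in the same way. Taking expectations over $X_1,\dots,X_{i-1}$ given $Z$ then yields the stated identity.

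For the $\mu$-version, under $\mu$ the coordinates are independent fair signs. Conditioning on $\mathcal{F}_{i-1}$ and averaging out the later coordinates in the same way, $\mathbb{E}[uX_i\bar g(X_i)]=\tfrac u2(a-b)$. The right side, $\mathbb{E}[(uX_i)_+(\bar g(X_i)-\bar g(-X_i))]$, gives $\tfrac{|u|}{2}$ times $(a-b)$ when $u>0$, and times $(b-a)$ when $u<0$; in both cases this equals $\tfrac u2(a-b)$. This is the $\rho=0$ case of the computation above.

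The only delicate point is the reduction to $\bar g$. There I must check that $\mathbb{E}[g(X^{(i)})\mid Z,\mathcal{F}_{i-1},X_i]=\bar g(-X_i)$, which uses that the conditional law of the later coordinates given $Z$ does not depend on $X_i$. This is where conditioning on $Z$, rather than working under $\mathbb{Q}$ alone, is essential: under $\mathbb{Q}$ without $Z$, the $X_j$ are not independent. The remaining work is the algebra above, including the sign bookkeeping for $u<0$.
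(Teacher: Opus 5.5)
Your proof is correct and is essentially the paper's two-point argument in coordinate $i$: you integrate out $X_{i+1},\dots,X_n$ using conditional independence given $Z$, then check the scalar identity by cases on $\operatorname{sgn}(u)$. The paper skips that reduction by writing $S_i=(u_iX_i)_+-\frac{1-\rho Z_iX_i}{1+\rho Z_iX_i}(u_iX_i)_-$, recognizing the fraction as the conditional likelihood ratio of $X^{(i)}$ to $X$ given $Z$, and applying the reflection $X\mapsto X^{(i)}$ directly to the second term on the whole cube.
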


\begin{proof}
By the definition of $S_i$,
\[
S_i=(u_iX_i)_+
-\frac{1-\rho Z_iX_i}{1+\rho Z_iX_i}(u_iX_i)_-.
\]
Conditionally on $Z$, the fraction is the ratio of the probabilities
of $X^{(i)}$ and $X$. Since $u_i$ does not depend on $X_i$, changing
$X$ to $X^{(i)}$ in the second term proves the identity. Under $\mu$, this ratio is $1$ and
$S_i$ is replaced by $(u_iX_i)_+-(u_iX_i)_-=u_iX_i$.
\end{proof}

Taking $g=1_A$ in Lemma~\ref{lem:boolean-two-point}, summing over $i$,
and using $\mathbb{E}_{\mathbb{Q}}g(X)=\mathbb{E}_{\mu}[Fg]$, we obtain
\[
\begin{aligned}
&\mathbb{E}_{\mathbb{Q}}[1_AS]-\eta\mathbb{E}_{\mu}[1_AM]\\
&\qquad=
\mathbb{E}_{\mu}\sum_{i=1}^n
(u_iX_i)_+(F(X)-\eta)
\bigl(1_A(X)-1_A(X^{(i)})\bigr)\geq0.
\end{aligned}
\]
Every summand is nonnegative, since $A=\{F>\eta\}$. Thus
\begin{equation}\label{eq:boolean-comparison}
\mathbb{E}_{\mathbb{Q}}[1_AS]\geq\eta\mathbb{E}_{\mu}[1_AM].
\end{equation}

\begin{lemma}\label{lem:boolean-moments}
With the notation above,
\[
\mathbb{E}_{\mathbb{Q}}[S\mid Z]=0,
\]
and
\[
\begin{aligned}
\mathbb{E}_{\mathbb{Q}}[S^2\mid Z]
&=\mathbb{E}_{\mathbb{Q}}\left[\sum_{i=1}^n S_i^2\,\middle|\,Z\right]\\
&\leq\frac{1+\rho}{1-\rho}\,
\mathbb{E}_{\mathbb{Q}}\left[\sum_{i=1}^n u_i^2\,\middle|\,Z\right].
\end{aligned}
\]
\end{lemma}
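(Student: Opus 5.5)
The plan is to work conditionally on $Z$ throughout. Under $\mathbb{Q}$ given $Z$, the $X_i$ are independent with $\mathbb{Q}(X_i=x_i\mid Z)=(1+\rho Z_ix_i)/2$. Set $\mathcal{G}_{i}:=\sigma(Z,X_1,\ldots,X_i)$. Since $u_i$ is $\mathcal{F}_{i-1}$-measurable, it is $\mathcal{G}_{i-1}$-measurable, and so is $Z_i$. Hence $S_i$ is a function of $X_i$ with $\mathcal{G}_{i-1}$-measurable coefficients. The first step is to show $\mathbb{E}_{\mathbb{Q}}[S_i\mid\mathcal{G}_{i-1}]=0$. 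This follows because $X_i$ is independent of $\mathcal{G}_{i-1}$ given $Z$, so $\mathbb{E}_{\mathbb{Q}}[X_i-\rho Z_i\mid\mathcal{G}_{i-1}]=\rho Z_i-\rho Z_i=0$, while the multiplier $u_i/(1-\rho Z_i\operatorname{sgn}(u_i))$ is $\mathcal{G}_{i-1}$-measurable. (Alternatively, take $g\equiv1$ in Lemma~\ref{lem:boolean-two-point}.) Thus $(S_i)$ is a martingale difference sequence with respect to $(\mathcal{G}_i)$. Summing gives $\mathbb{E}_{\mathbb{Q}}[S\mid Z]=0$. Expanding $S^2$, the cross terms $\mathbb{E}[S_iS_j\mid Z]$ for $i<j$ vanish by conditioning on $\mathcal{G}_{j-1}$. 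This yields the equality $\mathbb{E}_{\mathbb{Q}}[S^2\mid Z]=\mathbb{E}_{\mathbb{Q}}[\sum_i S_i^2\mid Z]$.

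For the inequality, I will bound the conditional second moment term by term:
\[
\mathbb{E}_{\mathbb{Q}}[S_i^2\mid\mathcal{G}_{i-1}]\leq\frac{1+\rho}{1-\rho}\,u_i^2 .
\]
Then I will take $\mathbb{E}_{\mathbb{Q}}[\,\cdot\mid Z]$ and sum over $i$. This is a two-point computation. Write $\epsilon:=Z_i\operatorname{sgn}(u_i)\in\{-1,1\}$, assuming $u_i\neq0$. Then
\[
\mathbb{E}[(X_i-\rho Z_i)^2\mid\mathcal{G}_{i-1}]=1-\rho^2,
\]
so
\[
\mathbb{E}[S_i^2\mid\mathcal{G}_{i-1}]=u_i^2\,\frac{1-\rho^2}{(1-\rho\epsilon)^2}.
\]
The worst case is $\epsilon=1$, which gives $u_i^2(1+\rho)/(1-\rho)$. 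The case $\epsilon=-1$ gives $u_i^2(1-\rho)/(1+\rho)$, which is smaller.

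There is no serious obstacle here. The only points needing care are the following. First, I must check that the denominators are nonzero, which holds since $\rho<1$, and handle $S_i=0$ when $u_i=0$. Second, I must justify conditioning on the enlarged filtration $\mathcal{G}_i$. The key fact is that, given $Z$, $X_i$ is independent of $(X_1,\ldots,X_{i-1})$. Third, I must note that $u_i$, although defined via $\mathbb{E}_{\mathbb{Q}}[\,\cdot\mid\mathcal{F}_{i-1}]$, is a deterministic function of $X_1,\ldots,X_{i-1}$. The statement allows arbitrary $\mathcal{F}_{i-1}$-measurable $u_i$, and the argument uses nothing more than this measurability.
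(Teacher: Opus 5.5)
Your proposal is correct and matches the paper's proof: both condition on $Z$ and $\mathcal{F}_{i-1}$ (your $\mathcal{G}_{i-1}$), use that $X_i$ then has mean $\rho Z_i$ and variance $1-\rho^2$, and observe that the cross terms vanish. Your value $u_i^2(1-\rho^2)/(1-\rho\epsilon)^2$ simplifies to $u_i^2(1+\rho\epsilon)/(1-\rho\epsilon)$, which is exactly the paper's expression and yields the same bound $\frac{1+\rho}{1-\rho}u_i^2$.
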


\begin{proof}
Conditionally on $Z$ and $\mathcal{F}_{i-1}$, the coefficient $u_i$ is
fixed and $X_i$ has mean $\rho Z_i$ and variance $1-\rho^2$. Hence
\[
\mathbb{E}_{\mathbb{Q}}[S_i\mid Z,\mathcal{F}_{i-1}]=0,
\]
and
\[
\mathbb{E}_{\mathbb{Q}}[S_i^2\mid Z,\mathcal{F}_{i-1}]
=u_i^2\frac{1+\rho Z_i\operatorname{sgn}(u_i)}
                 {1-\rho Z_i\operatorname{sgn}(u_i)}
\leq\frac{1+\rho}{1-\rho}\,u_i^2.
\]
The mixed terms vanish after conditioning on $Z$, and summing proves
the result.
\end{proof}

\begin{theorem}\label{thm:boolean}
For every $\eta>1$,
\[
\mu(T_{\rho}f>\eta)
\leq
\sqrt{\frac{1+\rho}{1-\rho}}\,
\frac{1}{\eta\sqrt{\log\eta}}.
\]
\end{theorem}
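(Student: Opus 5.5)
The plan follows the outline given just before the statement, with the Boolean lemmas already in hand. First I would dispose of the range $1<\eta<e$. There $\eta\sqrt{\log\eta}<e$, and since $\sqrt{(1+\rho)/(1-\rho)}\geq1$ the trivial bound $\mu(T_\rho f>\eta)\leq1/\eta$ is not enough by itself. So I would either check that the argument below runs for every $\eta>1$, or handle the small range separately. I expect the argument to run unchanged, because $\eta\geq e$ is only used to guarantee $\log\eta>0$. In any case I assume $\mu(A)>0$.

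\textbf{Lower bound.} The first step is the comparison \eqref{eq:boolean-comparison}, $\mathbb{E}_{\mathbb{Q}}[1_AS]\geq\eta\mathbb{E}_{\mu}[1_AM]$, which was derived from Lemma~\ref{lem:boolean-two-point}. Next I would use that $M\geq\log\eta$ on $A$. To see this, note that $F_n=F$, so on $A$ the stopping time $\sigma$ is the first index with $F_\sigma\geq\eta$. Then $\log\eta\leq\log F_\sigma=\sum_i\log(1+u_iX_i)\leq M$, using $\log(1+x)\leq x$. Together these give
\[
\mathbb{E}_{\mathbb{Q}}[1_AS]\geq\eta\log\eta\,\mu(A).
\]

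\textbf{Upper bound.} By Lemma~\ref{lem:boolean-moments}, $\mathbb{E}_{\mathbb{Q}}S=0$, so $\mathbb{E}_{\mathbb{Q}}[1_AS]=\mathbb{E}_{\mathbb{Q}}[(1_A-\tfrac12)S]$. Cauchy--Schwarz then gives a bound by $\tfrac12\sqrt{\mathbb{E}_{\mathbb{Q}}S^2}$, and Lemma~\ref{lem:boolean-moments} reduces this to
\[
\tfrac12\sqrt{\tfrac{1+\rho}{1-\rho}\,\mathbb{E}_{\mathbb{Q}}\textstyle\sum_iu_i^2}.
\]
What remains is the stopped energy bound $\mathbb{E}_{\mathbb{Q}}\sum_iu_i^2\leq4\log\eta$. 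With that bound the upper estimate becomes $\sqrt{\tfrac{1+\rho}{1-\rho}\log\eta}$, and dividing by $\eta\log\eta$ yields the claimed inequality.

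\textbf{Energy bound (main obstacle).} I would prove it with a discrete analogue of the Shiryaev entropy computation. Conditionally on $\mathcal{F}_{i-1}$, under $\mu$ the factor $1+u_iX_i$ takes the values $1\pm u_i$ with probability $\tfrac12$ each, and under $\mathbb{Q}$ it carries an extra weight of that same factor. Hence on $\{i\leq\sigma\}$,
\[
\mathbb{E}_{\mathbb{Q}}[\log(1+u_iX_i)\mid\mathcal{F}_{i-1}]=\tfrac12\bigl[(1+u_i)\log(1+u_i)+(1-u_i)\log(1-u_i)\bigr].
\]
The function $\phi(x)=(1+x)\log(1+x)+(1-x)\log(1-x)$ satisfies $\phi(x)\geq x^2$ for $|x|<1$: it is even, $\phi(0)=0$, and its series $\sum_k x^{2k}/(k(2k-1))$ has leading term $x^2$. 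Summing over $i$ gives
\[
\tfrac12\,\mathbb{E}_{\mathbb{Q}}\textstyle\sum_iu_i^2\leq\mathbb{E}_{\mathbb{Q}}\log F_\sigma=\mathbb{E}_\mu[F_\sigma\log F_\sigma].
\]
Here I use that $\mathbb{Q}$ restricted to $\mathcal{F}_\sigma$ has density $F_\sigma$, by optional stopping.

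The hard part is bounding the right-hand side. In the Gaussian case one has $F_\sigma\leq\eta$. In the discrete case $F_\sigma$ can overshoot $\eta$, though at most by a factor $1+\rho<2$, since $F_{\sigma-1}<\eta$. I would bound $F_\sigma\log F_\sigma\leq F_\sigma\log(2\eta)$ and use $\mathbb{E}F_\sigma=1$ to get $\mathbb{E}_{\mathbb{Q}}\sum_iu_i^2\leq2\log(2\eta)$. When $\eta\geq e$ we have $\log 2\leq\log\eta$, so this is at most $4\log\eta$, matching the constant $4$ in the outline. This is also where $\eta\geq e$ enters. For $1<\eta<e$ I would need a separate argument, and I would first try the crude bound $\mu(A)\leq1/\eta$. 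That bound fails only when $\sqrt{\log\eta}\cdot\sqrt{(1-\rho)/(1+\rho)}>1$, which cannot happen for $\eta<e$, so the small range is covered.
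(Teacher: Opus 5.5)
Your proposal is correct and follows the paper's proof essentially step for step. It uses the comparison \eqref{eq:boolean-comparison} with $M\geq\log\eta$ on $A$, then centering via $\mathbb{E}_{\mathbb{Q}}S=0$, Cauchy--Schwarz and Lemma~\ref{lem:boolean-moments}, then the stopped entropy bound $\mathbb{E}_{\mathbb{Q}}\sum_iu_i^2\leq2\mathbb{E}_{\mathbb{Q}}\log F_\sigma\leq2\log(2\eta)\leq4\log\eta$ via the overshoot factor $1+\rho$, and finally Markov's inequality for $1<\eta<e$.

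You should delete two claims in your opening paragraph, because your own last paragraph correctly shows both are false. Markov is not insufficient for $\eta<e$: $1/\eta$ is at most the target bound whenever $\log\eta\leq1$. And $\eta\geq e$ is not there only to make $\log\eta>0$: it is exactly what gives $\log2\leq\log\eta$.
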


\begin{proof}
By Markov's inequality, it is enough to consider $\eta\geq e$.
Use the stopped coefficients $u_i$ and $A,\sigma,M,S$ defined above.
The last likelihood increment is at most $1+\rho$, so
\[
F_\sigma\leq\eta(1+\rho).
\]
For $|a|<1$,
\[
\frac{1+a}{2}\log(1+a)+\frac{1-a}{2}\log(1-a)\geq\frac{a^2}{2}.
\]
Conditioning on $\mathcal{F}_{i-1}$ under $\mathbb{Q}$ and summing gives
\[
\mathbb{E}_{\mathbb{Q}}\sum_{i=1}^n u_i^2
\leq2\mathbb{E}_{\mathbb{Q}}\log F_{\sigma}
\leq2\log\bigl(\eta(1+\rho)\bigr)
\leq4\log\eta.
\]
By Lemma~\ref{lem:boolean-moments},
\[
\mathbb{E}_{\mathbb{Q}}S=0,
\qquad
\mathbb{E}_{\mathbb{Q}}S^2
\leq4\frac{1+\rho}{1-\rho}\log\eta.
\]
Finally, \eqref{eq:boolean-comparison} and Cauchy--Schwarz give
\[
\begin{aligned}
\eta\log\eta\,\mu(A)
&\leq\eta\mathbb{E}_{\mu}[1_AM]
\leq\mathbb{E}_{\mathbb{Q}}[1_AS]\\
&=\mathbb{E}_{\mathbb{Q}}[(1_A-\tfrac12)S]
\leq\tfrac12\sqrt{\mathbb{E}_{\mathbb{Q}}S^2}
\leq\sqrt{\frac{1+\rho}{1-\rho}\log\eta}.
\end{aligned}
\]
Dividing by $\eta\log\eta$ proves the result.
\end{proof}

\section{The role of AI in this proof}
The author acknowledges the use of ChatGPT-5.6 assistance to close the argument
based on the supplied plan and preliminary gaussian case draft developed by the author. 
Furthermore,  ChatGPT was used for proof-reading and numerical experiments.

\end{document}